\newcommand{\pp}{{\mathbb P}}
\newcommand{\nn}{{\mathbb N}}
\newcommand{\cale}{{\mathcal E}}
\newcommand{\cals}{{\mathcal S}}
\newcommand{\calo}{{\mathcal O}}
\newcommand{\calv}{{\mathcal V}}
\newcommand{\beq}{\begin{eqnarray*}}
	\newcommand{\feq}{\end{eqnarray*}}
\newcommand{\beqn}{\begin{eqnarray}}
	\newcommand{\feqn}{\end{eqnarray}}
\newtheorem{theorem}{Theorem}
\newtheorem*{conj*}{Conjecture}
\makeatletter \@addtoreset{theorem}{section}\makeatother
\newtheorem{lemma}[theorem]{Lemma}
\newtheorem*{theorema*}{Theorem~A}
\newtheorem*{theoremb*}{Theorem~B}
\newtheorem*{theoremc*}{Theorem~C}
\newtheorem*{theoremd*}{Theorem~D}
\newtheorem*{theoreme*}{Theorem~E}
\newtheorem*{theoremf*}{Theorem~F}
\newtheorem*{cld*}{Condition $\mbox{LD}_d$}
\newtheorem*{theorem*}{Theorem}
\newtheorem{corollary}[theorem]{Corollary}
\def\BState{\State\hskip-\ALG@thistlm}
\newlength\myindent
\DeclareMathOperator{\PP}{\mathbb P}
\DeclareMathOperator{\occ}{\mbox{occ}}
\title{Pattern occurrences in $k$-ary words revisited: a few new and old observations}
\author{Toufik~Mansour\thanks{ Department of Mathematics, University of Haifa, 199 Abba Khoushy Ave, 3498838 Haifa, Israel;
		\newline e-mail: tmansour@univ.haifa.ac.il}
	\and
	Reza~Rastegar\thanks{Occidental Petroleum Corporation, Houston, TX 77046 and Departments of Mathematics and Engineering, University of Tulsa, OK 74104, USA - Adjunct Professor; e-mail:  reza\_rastegar2@oxy.com}}
\begin{document}
	\maketitle
	\begin{abstract}
		In this paper, we study the pattern occurrence in $k$-ary words. We prove an explicit upper bound on the number of $k$-ary words avoiding any given pattern using a random walk argument. Additionally, we reproduce one already known result on the exponential rate of growth of pattern occurrence in words and establish a simple connection among pattern occurrences in permutations and $k$-ary words. A simple yet interesting consequence of this connection is that the Wilf-equivalence of two patterns in words implies their Wilf-equivalence in permutations.
	\end{abstract}
	{\em MSC2010: } Primary~05A05, 05A16, 05C81.\\
	\noindent{\em Keywords}: pattern avoidance and occurrence, $k$-ary words, permutations, random walk.
	
	\section{Introduction}
	
	The enumeration of pattern occurrences in discrete sequences has been a very active area of research in the last three decades; see for instance the monographs \cite{Bbook, Kbook}. Although, in the beginning, the main focus was mainly on pattern occurrence in permutations, other classes of sequences including $k$-ary words \cite{noga1, BM, regev1} has been also the subject of further research. The main theme of this paper is also the enumeration of pattern occurrences in $k$-ary words. 
	
	\par
	To state our results we first define a few notations and concepts. Let $\nn:=\{1,2,3,\ldots \}$ and $\nn_0$ denote, respectively, the set of natural numbers and the set of non-negative integers; that is $\nn_0=\nn \cup \{0\}.$ For a given set $A,$ $\#A$ is the cardinality of $A.$ A word $w$ is a sequence $w:=w_1\cdots w_n,$ where $w_i$ is the $i$-the entry of $w$ and is chosen from an arbitrary set of letters also known as alphabet. For any given $k\in \nn,$ we denote the set $\{1,2, \cdots k\}$ by $[k]$. We adopt the convention that $[k]^0=\{\epsilon\},$ where $\epsilon$ is an empty word. A \emph{$k$-ary word} of length $n$ is an element of $[k]^n,$ $n\in\nn.$ A \emph{pattern} is any distinguished word chosen from $\cup_\ell [k]^\ell$ containing letters in $[k]$. Let us now fix integers $k\in \nn,$ $\ell\geq 2,$ and a pattern $v$ in $[k]^\ell$. These parameters are considered to be given and fixed throughout the rest of the paper. An important characteristic of the pattern is its number of distinct letters. We will denote this by $d.$ For instance, if $v=33415,$ then $\ell=5$ and $d=4.$ For an arbitrary word $w$ with length $n\geq \ell,$ an occurrence of the pattern $v$ in $w$ is a sequence of $\ell$ indexes
	$1\leq j_1<j_2<\dots<j_\ell\leq n $ such that the \emph{subsequence} $w_{j_1}\cdots w_{j_\ell}$
	is \emph{order-isomorphic} to the word $v,$ that is
	\beq
	w_{j_p}<w_{j_q}\Longleftrightarrow  v_p<v_q\qquad \forall\,1\leq p,q\leq \ell
	\feq
	and
	\beq
	w_{j_p}=w_{j_q}\Longleftrightarrow  v_p=v_q\qquad \forall\,1\leq p,q\leq \ell.
	\feq
	For any arbitrary word $w$ we denote by $\occ_v(w)$ the number of occurrences of $v$ in $w.$ For instance, if $v$ is the \emph{inversion} $21$
	and $w=35239$, then there are three occurrences, those are $w_1w_3=32,$ $w_2w_3=52,$ and $w_2w_4=53$, and consequently $\occ_v(w)=3$. 	
	We say that a word $w$ \emph{contains the pattern} $v$ exactly $r$ times, $r\in\nn_0,$ if $\occ_v(w)=r.$  By the {\it occurrence subsequence} of $w$, we refer to the minimal length subsequence of the word $w$ containing all the $r$ occurrences of $v$. For instance, in the previous example, the occurrence subsequence are the subsequence $3523.$ For a set of words $A$ and $r\in \nn_0,$ we denote by $F_r^v(A)$ the set of words in $A$ each of which contains $v$ exactly $r$ times. That is,
	\beqn
	\label{gfr}
	F_r^v( A)=\{w\in A:\occ_v(w)=r\}.
	\feqn
	We use $f_r^v(A)$ to refer to $\#F_r^v( A)$. In the case of $r=0,$ we use the term {\it avoidance} instead of {\it occurrence}.
	
	Our first theorem states \\
	
	\begin{theorem} \label{BM_theorem} For any pattern $v$ of length $\ell$ with $d$ distinct letters and any $k\in \nn$ with $k > d$, we have
		\begin{enumerate}
			\item[(a)] $(d-1)^n \leq f_0^v([k]^n) \leq (d-1)^n \sum_{i=0}^{\ell \binom{k}{d}} \binom{n}{i} \left(\frac{k-d+1}{d-1}\right)^i, $ whenever $n> \ell \binom{k}{d}$.		
			\item[(b)] $\lim_{n\to \infty} f_r^v([k]^n)^{\frac{1}{n}} = d-1,$ whenever $r\in \nn_0.$
			\item[(c)] $f_0^v([k]^n)=\sum_{i=0}^{n} a_{i,k,v}(n)(d-1)^i,$
			where $a_{i,k, v}(n)$ are polynomials in $n$ with integer coefficients possibly depending on $k$ and $v$.     		
		\end{enumerate}
	\end{theorem}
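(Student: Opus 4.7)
For part (a), the lower bound is immediate: each of the $(d-1)^n$ words over the sub-alphabet $\{1,\dots,d-1\}\subseteq[k]$ uses at most $d-1$ distinct letters and hence cannot realize $v$. For the upper bound I follow the random-walk strategy announced in the abstract. Partition $[k]=L\sqcup H$ with $|L|=d-1$ and $|H|=k-d+1$, and to each $w\in[k]^n$ associate the walk $S_i=\#\{j\le i:\,w_j\in H\}$ built from the indicators $\one{w_j\in H}$. The form of the target bound $\sum_{i=0}^{M}\binom{n}{i}(k-d+1)^i(d-1)^{n-i}$ with $M:=\ell\binom{k}{d}$ suggests an injective encoding of each $v$-avoiding word by (i) a set $T\subseteq\{1,\dots,n\}$ of at most $M$ landmark positions, (ii) an $H$-letter assignment on $T$, and (iii) a free $L$-letter assignment on $\{1,\dots,n\}\setminus T$. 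The central technical step is to show that $M$ landmarks always suffice; the argument runs a pigeonhole over the $\binom{k}{d}$ possible target $d$-subsets of $[k]$ and the $\ell$ pattern slots per subset, showing that once the walk $S_i$ exceeds $M$ an occurrence of $v$ is forced. Granted such an encoding, summing over $|T|=i\le M$ yields the stated bound. Establishing the encoding, and the landmark ceiling $M$, is the main obstacle.

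For part (b), I sandwich $f_r^v([k]^n)$ between two quantities of the same exponential rate. The upper bound $f_r^v([k]^n)\le P_r(n)\,f_0^v([k]^n)$ with $P_r$ a polynomial depending on $r$, $v$, $k$ follows from a deletion map: a word with exactly $r$ occurrences of $v$ becomes $v$-avoiding after deleting at most $r\ell$ positions forming a transversal of all occurrences, and the inverse of the map has fibres of size polynomial in $n$. For the lower bound I produce $\ge (d-1)^{n-C_r}$ words with exactly $r$ occurrences by modifying, via a bounded-length local insertion or suffix replacement, the $(d-1)^{n-C_r}$ $v$-avoiding words of length $n-C_r$ over $\{1,\dots,d-1\}$; the modification is engineered to introduce exactly $r$ new occurrences without further interference, a pattern-specific but routine construction. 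Combining with part (a) and taking $n$-th roots delivers $\lim_n f_r^v([k]^n)^{1/n}=d-1$.

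For part (c), I exploit the injective encoding underlying part (a). Each $v$-avoiding word corresponds to a triple consisting of a landmark set $T$ with $|T|\le M$, an $H$-letter assignment on $T$, and an $L$-letter assignment on $\{1,\dots,n\}\setminus T$, subject to a finite-state admissibility constraint on $(T,H\text{-assignment})$ depending only on $k$ and $v$. Grouping by $|T|=i$ yields $f_0^v([k]^n)=\sum_{i=0}^{M}a_{i,k,v}(n)\,(d-1)^{n-i}$, and re-indexing $i\mapsto n-i$ produces the claimed form. The coefficient $a_{i,k,v}(n)$ counts length-$n$ admissible configurations of fixed size $i$; since the decorations form a finite set depending only on $k$ and $v$, and the only $n$-dependence enters through position choices ($\binom{n}{j}$ for $j\le M$), $a_{i,k,v}(n)$ is a $\mathbb{Z}$-linear combination of such binomials, hence a polynomial in $n$ with integer coefficients. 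Equivalently, a finite-state transfer-matrix representation of the avoidance language expresses $f_0^v([k]^n)$ as a $\mathbb{Z}[n]$-linear combination of powers of characteristic eigenvalues, of which $d-1$ is the dominant one by part (a); standard transfer-matrix theory then guarantees polynomiality of the coefficients.
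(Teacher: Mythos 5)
Your upper bound in part (a) rests on a claim that is false, and it is exactly the step you flag as ``the main obstacle.'' With a \emph{fixed} partition $[k]=L\sqcup H$, $\#H=k-d+1$, it is not true that a $v$-avoiding word has at most $M=\ell\binom{k}{d}$ letters in $H$: take $v=123$, $k=4$, so $H$ is some $2$-element subset of $[4]$, say $H=\{3,4\}$; the constant word $44\cdots4$ of length $n>M$ avoids $123$ while every one of its letters lies in $H$. (For any pattern with $d\ge 2$ and any nonempty $H$ a constant word over $H$ gives the same contradiction.) So the walk $S_i$ can exceed $M$ without forcing an occurrence, and no injective encoding of the type you describe exists. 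The paper's argument differs at precisely this point: the set of letters that ``advance'' the configuration is \emph{state-dependent}. It builds the poset $\calv_k(v)$ of prefix-vectors of the $\binom{k}{d}$ instances of $v$, shows that from any state $\xi$ the set $L_{v,k}(\xi)$ of advancing letters has size at least $k-d+1$ (Lemma~\ref{li_lemma}, itself a nontrivial double induction), and that any chain of states has length at most $M$ (Lemma~\ref{chain-lemma}). The factor $(k-d+1)^i(d-1)^{n-i}$ then comes not from a bijection but from a stochastic comparison: the uniform random word drives a monotone Markov chain whose holding probabilities are all at most $(d-1)/k$, so the number of advances by time $n$ stochastically dominates a Binomial with parameter $(k-d+1)/k$, while avoidance forces at most $M$ advances. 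Note that even after repairing your encoding with state-dependent letter sets, a deterministic injection would record up to $k$ (not $k-d+1$) choices at each landmark position, since $\#L_{v,k}(\xi)$ is only bounded \emph{below} by $k-d+1$; the probabilistic domination is what recovers the stated constant.

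Part (b) of your proposal is essentially the paper's proof: delete a bounded set of positions meeting all occurrences to reduce to avoidance (the paper deletes the full occurrence subsequence, of length between $\ell+r-1$ and $r\ell$), with a polynomial bound on the fibres; both you and the paper treat the matching lower-bound construction rather lightly. That part is fine. Part (c), however, inherits the gap from (a) in your first route, and your transfer-matrix fallback does not deliver the claimed form: generic transfer-matrix theory yields $\sum_j p_j(n)\lambda_j^n$ over characteristic eigenvalues $\lambda_j$, which need not organize into integer-coefficient polynomial multiples of powers of $d-1$. What the paper actually uses is that its transition matrix is upper triangular with every nonzero entry of the form $i/k$ for an integer $1\le i\le d-1$, and with diagonal entries bounded by $(d-1)/k$, so that $k^nP^{(n)}(\xi^\epsilon,\cdot)$ expands as an integer-weighted sum over chains whose loop weights are integers at most $d-1$. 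That integrality, and the bound $d-1$ on the diagonal, are the substance of part (c), and neither is available without the state-dependent analysis of Section~\ref{pattern-sec}.
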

	Part (a) gives a new general upper bound on $f_0^v([k]^n)$ which is a function of $\ell$, $d$, $k$, and $n$. Its proof, given in Section \ref{kn-sec}, is based on a simple random walk argument over the poset of instances of the pattern $v$ for the alphabet $[k]$. This poset is defined in Section \ref{pattern-sec}.  
	
	The statement of part (b) is not new, however, our proof is. The existence of this limit and its value was first proven for the $r=0$ case, i.e. pattern avoidance, in Theorem 3.2 \cite{BM} using  transfer matrix method (see section 4.7 of \cite{stan1}). The result was extended later to the general occurrence case $r\in \nn$ in Theorem 2.7 \cite{MRR} by a similar technique. Our approach for $r=0$ case is similar to that of \cite{BM}, in the sense that, the number of walks in certain graphs are counted. The advantages of our method in proving the case $r=0$ are however twofold: (1) the poset defined in Section \ref{pattern-sec} is simpler to construct and more intuitive than those defined in \cite{BM, MRR}. (2) our approach has a generalizable probabilistic flavor; for instance, our results can be extended to other contexts, such as when the words are generated by any finite irreducible aperiodic Markov chain. Our proof of the $r\in \nn$ case is entirely different from the transfer matrix method used in \cite{MRR} and it is done directly by relating the $r\in \nn$ case to the $r=0$ case. \\
	\par
	Recall that a word $w=w_1\cdots w_n$ is a permutation if it is simply a re-arrangement of the sequence $12\ldots n$; that is $w\in F_0^{11}([n]^n).$ We use $\cals_n$ to refer to the set of all permutations of length $n$.
	The celebrated Marcus-Tardos Theorem \cite{MT}, confirming Stanley-Wilf conjecture for pattern avoidance in $\cals_n$, states that the number of permutations
	in $\cals_n$ avoiding a permutation pattern $v\in \cals_d$ grows at most as fast as an exponential function of $n$; that is, there exists a constant $c_v$ where for all $n\in \nn$,
	\beq
	f_0^v(\cals_n) \leq c_v^n.
	\feq	
	Around the time this conjecture was settled, Br\"and\'{e}n and Mansour \cite{BM} conjectured the plausibility of equivalence of Marcus-Tardos Theorem for $\cals_n$ with a similar result for $[n]^n$. More precisely, they conjectured that there are $c_v$ and $g_v$ where for all $n\in \nn$ the following statement holds:
	\beqn \label{BM_conj}
	f_0^v(\cals_n) < c_v^n \quad \text{if and only if} \quad f_0^v([n]^n) < g_v^n.
	\feqn
	By a reformulation of the pattern avoidance in the language of graph theory, Corollary 2.2 \cite{klazar4} settled the existence of $g_v>0$ where $f_0^v([n]^n) < g_v^n$ for all $n\in \nn$. However, yet there is no direct explanation on why \eqref{BM_conj} actually holds and whether something nontrivial could be said about the relationship between the optimal values of $c_v$ and $g_v$. That being said, our motivation behind Theorem \ref{i^nSn} bellow was originally to establish some connection between $f_0^v([n]^n)$ and $f_0^v(\cals_0^v)$ (see also the inequality \eqref{sdisc}):	
	
	\begin{theorem} \label{i^nSn} Let $v$ be any pattern. Then, for $r\in \nn_0,$
		\beqn \label{f_perm_n}
		&&f_r^v(\cals_n) =  \sum_{k=1}^{n} (-1)^{n-k} \binom{n}{k} f_r^v([k]^n).
		\feqn
		Furthermore, let $S_r^v(x)$ and $W_{r,k}^v(x)$ be the exponential generating functions of $f_r^v(\cals_n)$ and $f_r^v([k]^{n})$; those are,
		\beq
		&&S_r^v(x) = \sum_{n=1}^\infty \frac{f_r^v(\cals_n)}{n!}x^n \quad \mbox{and}\quad W_{r,k}^v(x) = \sum_{n=1}^\infty \frac{f_r^v([k]^{n})}{n!}x^n, \quad k\in \nn.
		\feq
		Then	
		\beqn \label{exp_relations}
		&&S_r^v(x) = \sum_{k=1}^\infty \frac{(-x)^k}{k!} \frac{\partial^k}{\partial x^k} W_{r,k}^v(-x).
		\feqn
	\end{theorem}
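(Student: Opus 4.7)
\medskip

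\noindent\textbf{Proof plan.} The plan is to first establish \eqref{f_perm_n} as a combinatorial identity via inclusion--exclusion on the alphabet, and then derive \eqref{exp_relations} as a routine consequence in the exponential-generating-function world.

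For \eqref{f_perm_n}, the key observation is that $\occ_v(w)$ depends only on the relative order of the letters of $w$; in particular, it is invariant under any strictly increasing relabeling of the letter alphabet. For $1\le j\le n$, let $g_r^v(j,n)$ denote the number of words in $[j]^n$ that \emph{use every letter} of $[j]$ and contain $v$ exactly $r$ times. Partitioning $F_r^v([k]^n)$ according to the subset of letters of $[k]$ that actually appear in the word, and invoking the relabeling invariance, one obtains
$$f_r^v([k]^n)=\sum_{j=1}^{\min(k,n)}\binom{k}{j}\,g_r^v(j,n).$$
Applying standard binomial (M\"obius) inversion yields
$$g_r^v(k,n)=\sum_{j=1}^{k}(-1)^{k-j}\binom{k}{j}\,f_r^v([j]^n).$$
Specializing to $k=n$ finishes the first identity: by pigeonhole, a surjective word of length $n$ on the alphabet $[n]$ must be a bijection, hence a permutation, so $g_r^v(n,n)=f_r^v(\cals_n)$, which is exactly \eqref{f_perm_n}.

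For \eqref{exp_relations}, substitute \eqref{f_perm_n} into the definition of $S_r^v(x)$ and interchange the order of summation:
$$S_r^v(x)=\sum_{n=1}^\infty\frac{x^n}{n!}\sum_{k=1}^n(-1)^{n-k}\binom{n}{k}f_r^v([k]^n)=\sum_{k=1}^\infty\frac{x^k}{k!}\sum_{m=0}^\infty\frac{(-1)^m\,f_r^v([k]^{m+k})}{m!}\,x^m,$$
where the change of index $m=n-k$ has been used in the inner sum. The inner sum is the value at $y=-x$ of $\frac{d^k W_{r,k}^v}{dy^k}(y)$, and the chain rule $\frac{\partial^k}{\partial x^k}W_{r,k}^v(-x)=(-1)^k\,\frac{d^k W_{r,k}^v}{dy^k}(-x)$ converts the prefactor $x^k$ into $(-x)^k$, producing exactly \eqref{exp_relations}.

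The main obstacle is conceptual rather than technical: noticing that the natural bridge between $\cals_n$ and $[k]^n$ is the class of surjective $k$-ary words, so that the obvious partition-by-support expression for $f_r^v([k]^n)$ inverts to give \eqref{f_perm_n}. Once this is in place the rest is bookkeeping, with only the $(-1)^k$ produced by the chain rule in the generating-function step requiring attention.
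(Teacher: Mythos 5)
Your proof is correct and follows essentially the same route as the paper: the paper's set $Y_n(A)$ of words with support exactly $A$ is your class of surjective words, its inclusion--exclusion computation is your binomial inversion, and both arguments conclude by noting that a length-$n$ word surjecting onto $[n]$ is a permutation. The generating-function manipulation (reindexing $m=n-k$ and absorbing the $(-1)^k$ from the chain rule) also matches the paper's derivation.
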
 	
	We remark that \eqref{f_perm_n} is indeed correct if we replace the pattern $v$ with a set of patterns. We end this section by stating an interesting immediate consequence of Theorem~\ref{i^nSn}:
	
	\begin{corollary}
		For any two patterns $v_1$ and $v_2$, if $f_0^{v_1}([k]^n)=f_0^{v_2}([k]^n)$ for all $k$ and $n$, then $f_0^{v_1}(\cals_n)=f_0^{v_2}(\cals_n)$ for all $n$.  The inverse statement does not hold. 	
	\end{corollary}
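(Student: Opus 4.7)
The corollary splits into a forward implication and a counterexample to its converse, which I would handle independently. For the forward implication, I would simply invoke identity \eqref{f_perm_n} of Theorem~\ref{i^nSn} at $r = 0$, namely $f_0^v(\cals_n) = \sum_{k=1}^{n} (-1)^{n-k} \binom{n}{k}\, f_0^v([k]^n)$. Since the right-hand side depends on $v$ only through the word-avoidance numbers $f_0^v([k]^n)$, the hypothesis $f_0^{v_1}([k]^n) = f_0^{v_2}([k]^n)$ for all $k$ and $n$ forces $f_0^{v_1}(\cals_n) = f_0^{v_2}(\cals_n)$ for every $n$ by term-by-term substitution. This step is essentially a single line and requires no additional machinery beyond the theorem just proved.

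For the failure of the converse, I would exhibit an explicit pair of patterns that agree on permutations but not on words. A particularly clean choice is $v_1 = 11$ and $v_2 = 112$. Each of these patterns contains a repeated letter, so no subsequence extracted from a permutation can be order-isomorphic to either (permutations have pairwise distinct entries); this gives $f_0^{v_1}(\cals_n) = f_0^{v_2}(\cals_n) = n!$ for every $n$, so the two patterns are trivially Wilf-equivalent over permutations. To separate them over $k$-ary words, I would specialize to $(k,n) = (2,3)$: pigeonhole yields $f_0^{11}([2]^3) = 0$, while direct inspection of the eight binary words of length three shows that only the word $112$ itself is order-isomorphic to the pattern $112$, whence $f_0^{112}([2]^3) = 7 \neq 0$.

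Both parts of the argument are mechanical, so no serious obstacle arises; the only judgement call is the choice of counterexample. A more ``natural'' attempt using two patterns from $\cals_3$, such as $123$ and $132$, would require noticeably more work: for $k = 3$ the bijection on $[3]^n$ that swaps the letters $2$ and $3$ interchanges $123$-containing words with $132$-containing words, so $f_0^{123}([3]^n) = f_0^{132}([3]^n)$ for every $n$, and one would have to pass to a larger alphabet to witness a genuine separation. The trivial pair $(11,112)$ bypasses this difficulty entirely.
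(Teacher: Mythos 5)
Your forward implication is exactly the paper's: the corollary is read off term by term from identity \eqref{f_perm_n} at $r=0$, so that part matches. Where you genuinely diverge is the counterexample to the converse. The paper takes two honest permutation patterns ($1324$ and $2413$, citing B\'ona for their Wilf-equivalence in $\cals_n$ and Jelinek--Mansour for their inequivalence on words), so its refutation rests on nontrivial external results. Your pair $(11,112)$ is self-contained and verifiable by hand: both patterns contain a repeated letter, hence occur in no permutation, giving $f_0^{11}(\cals_n)=f_0^{112}(\cals_n)=n!$, while $f_0^{11}([2]^3)=0\neq 7=f_0^{112}([2]^3)$ by pigeonhole and inspection. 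Since the paper's definition of ``pattern'' explicitly allows repeated letters, this is a legitimate counterexample to the literal statement; what you lose relative to the paper is the stronger (and more interesting) fact that the converse already fails for genuine permutation patterns, where the equality $f_0^{v_1}(\cals_n)=f_0^{v_2}(\cals_n)$ is not vacuous. One correction to your closing aside: by Burstein's formula, quoted in the paper's introduction, $f_0^{123}([k]^n)=f_0^{132}([k]^n)$ for \emph{every} $k$ and $n$, so enlarging the alphabet would never separate $123$ from $132$; that pair simply cannot serve as a counterexample, which only reinforces your decision to avoid it.
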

	
	This implies that Wilf-equivalence in words implies Wilf-equivalence in permutations. An example for the ``if'' statement is the following: it was shown in \cite{Burstein} that  
	\beq
	f_0^{123}([k]^n) = f_0^{132}([k]^n) =\delta_{k,1}+2^{n-2(k-2)}\sum_{j=0}^{k-2}\binom{n+2j}{n} \left(\sum_{m=j}^{k-2}\frac{1}{m+1}\binom{2m}{m}\binom{2k-2m}{k-m} \right),
	\feq 
	where $\delta_{k,1}=1$ if $k=1$, and is zero otherwise. Therefore,  the corollary implies $f_0^{123}(\cals_n)=f_0^{132}(\cals_n)$. This is indeed known to hold by a direct calculation \cite{Bbook}, showing 
	\beq
	f_0^{123}(\cals_n)=f_0^{132}(\cals_n) = \frac{1}{n+1}\binom{2n}{n}.
	\feq 
	To see the inverse statement in the corollary, take for example $v_1=1324$ and $v_2=2413$. As it is shown by B\'ona \cite{bona3} $f_0^{v_1}(S_n)=f_0^{v_2}(S_n)$; however, we know from \cite{tj1}, $f_0^{v_1}([k]^n)$ and $f_0^{v_2}([k]^n)$ are not same for all $k$ and $n$.

	\section{Instances of a pattern} \label{pattern-sec}
	
	In this section, we first define a partially ordered set (poset) based on the instances of a given pattern $v$ in the alphabet $[k]$ and discuss a few of its basic properties. To that end, for a given pattern $v$ of length $\ell$ with $d$ distinct letters, we define $\Phi_{k}(v)$ to be the set of all words $w\in[k]^{\ell}$ for which $\occ_v(w)=1.$ The size of this set, $\phi_k(v):=\#\Phi_k(v)$, is $\binom{k}{d}.$ For instance,
	\beq
	\Phi_4(123) = \{ 123,124,134,234 \}.
	\feq
	To decide whether a given word $w$ is in $F_0^v([k]^n)$ or not, we can advise a simple {\it avoidance detection algorithm}, as follows: it reads the letters in $w$ from left to right and updates a state vector reflecting the prefixes of the pattern instances in $\Phi_k(v)$ that have been encountered after any letter of $w$ is consumed. Clearly, the word $w$ avoids the pattern $v$, as long as, none of the state vector entries coincides with any element in $\Phi_k(v)$ by the time that the rightmost letter of $w$ is processed. Intuitively, this is a walk on the space of ``feasible" state vectors. To formalize this idea, we equip $\Phi_k(v)$ with an order $<_\circ$: for any pair $u,t\in \Phi_k(v)$, we say $u<_\circ t,$ if and only if, $u_i<_\circ t_i$ for the first index $1\leq i\leq \ell$ where $u_i\neq t_i$. For example, elements in $\Phi_4(123)$ are ordered as
	\beqn \label{instance_order_exp}
	123 <_\circ 124 <_\circ 134 <_\circ 234.
	\feqn
	For each $u\in \Phi_k(v)$, set $\Xi(u)$ to be the set of all prefixes of $u$ including the empty word $\epsilon$ and $u$ itself. For instance,
	\beq
	\Xi(1245) = \{ \epsilon, 1, 12, 124, 1245 \}.
	\feq
	Regardless of the value of $u\in \Phi_k(v),$ the size of $\Xi(u)$ is always $\ell+1$. 
	Next, set $\xi^\epsilon:=(\epsilon,\epsilon,\cdots,\epsilon)$ to be the null state vector and define
	\beq
	\Xi_k(v) := \{\xi:=(\xi_u)_{u\in \Phi_k(v)} \ | \ \xi_u \in \Xi(u),\  \forall u\in \Phi_k(v) \}.
	\feq
	The indexes of the entries of any $\xi\in \Xi_k(v)$s are ordered according to $<_\circ$. \\
	\par
	For any two given states $\xi, \eta\in \Xi_k(v)$, we say $\xi$ is extendable to $\eta$ in one step if there are a non-empty subset $A\in \Phi_k(v)$ and a unique $i\in [k]$ for which $\eta_t = \xi_t i$ (concatenation of the word $\xi_t$ and letter $i$) for $t\in A,$ and $\eta_t = \xi_t$, otherwise. We use the notation $\nu = \xi i$ in this case. Additionally, we define an order $<_\diamond$ on $\Xi_k(v)$: for any pair of  $\eta,\xi\in \Xi_k(v),$ we say $\xi <_\diamond \eta$, if and only if, $\xi$ is extendable to $\eta$ in $s$ steps for some finite $s\in\nn$; that is, $\eta=\xi i_1\dots i_s:=(\ldots ((\xi i_1)\ldots )i_s$ for a finite sequence of letters $i_1,\cdots, i_s\in [k]$. Let $\cale_k(v)$ be the set
	\beq
	\cale_k(v):=\{ \xi^\epsilon \} \cup \{ \xi\in \Xi_k(v), \xi^\epsilon<_\diamond \xi \}.
	\feq
	Then, each $\xi\in \cale_k(v)$ can be obtained by a finite extension of $\xi^\epsilon$; that is, $\xi^\epsilon$ is extendable in some finite number of steps to $\xi.$  Define
	\beq
	\calo_k(v) := \{ \xi \in \cale_k(v) \ | \  \xi_u=u \mbox{ for some } u\in \Phi_k(v) \}
	\feq
	and set $\calv_k(v):=\cale_k(v)\setminus \calo_k(v)$. It is clear that $\calv_k(v)$ is a poset with respect to $<_\diamond$. See Figures \ref{fig1}-\ref{fig3} for Hasse diagram of several examples of this poset with different patterns and the alphabet $[4]=\{1,2,3,4\}$. For each $\xi\in\calv_k(v)$, let $L_{v,k}(\xi)$ be the subset of alphabet $[k]$ whose elements extend $\xi$ in $\calv_k(v)\cup\calo_k(v).$
	
	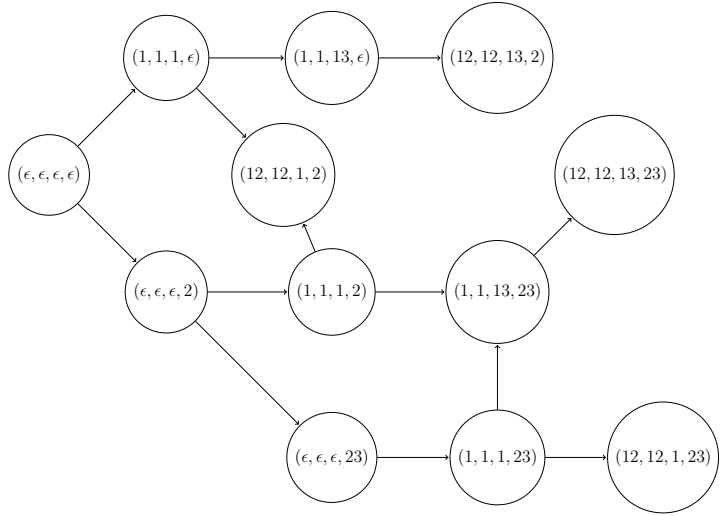
\begin{figure} [htp]
		\centering	
		\resizebox{0.7\textwidth}{!}{
			\begin{tikzpicture}[shorten >= 1pt,node distance=4cm,on grid,auto]
				\node[state] (q_0)   {
					$(\epsilon,\epsilon,\epsilon,\epsilon)$};
				\node[state] (q_1) [above right=of q_0] { $
					(1,1,1,\epsilon)$};
				\node[state] (q_2) [below right=of q_0] { $(\epsilon, \epsilon, \epsilon, 2)$};
				\node[state](q_3) [below right=of q_1] {$(12,12,1,2)$};
				\node[state](q_4) [right =of q_1] {$(1,1,13,\epsilon)$};
				\node[state](q_12) [right =of q_4] {$(12,12,13,2)$};		
				\node[state](q_5) [right =of q_2] {$(1,1,1,2)$};
				\node[state](q_6) [below =of q_5] {$(\epsilon,\epsilon, \epsilon,23)$};
				\node[state](q_8) [right=of q_5] {$(1,1,13,23)$};
				\node[state](q_13) [above right=of q_8] {$(12,12,13,23)$};		
				\node[state](q_9) [right=of q_6] {$(1,1,1,23)$};		
				\node[state](q_11) [right=of q_9] {$(12,12,1,23)$};							
				\path[->]
				(q_0)  edge  (q_1) edge  (q_2)
				(q_1)  edge  (q_3) edge  (q_4)	
				(q_4) edge (q_12)
				(q_2) edge  (q_5) edge  (q_6)	
				(q_5)  edge  (q_3) edge (q_8)
				(q_6)  edge  (q_9)
				(q_8)  edge  (q_13)
				(q_9)  edge  (q_8) 		
				(q_9)  edge  (q_11);
			\end{tikzpicture}
		}
		\caption{The Hasse diagram for the poset $\calv_k(v)$ with $v=123$ and $k=4.$} \label{fig1}
	\end{figure}
	
	\begin{figure} [htp]
		\centering	
		\resizebox{0.7\textwidth}{!}{
			\begin{tikzpicture}[shorten >= 1pt,node distance=4cm,on grid,auto]
				\node[state] (q_0)   {
					$(\epsilon,\epsilon,\epsilon,\epsilon)$};
				\node[state] (q_1) [above right=of q_0] { $(2,2,\epsilon,\epsilon)$};
				\node[state] (q_2) [below right=of q_0] { $(\epsilon, \epsilon, 3, 3)$};
				\node[state](q_3) [above right=of q_1] {$(21,21,\epsilon,\epsilon)$};
				\node[state](q_4) [below =of q_3] {$(2,2,3,3)$};
				\node[state](q_5) [right =of q_4] {$(21,21,31,3)$};		
				\node[state](q_7) [below right=of q_2] {$(\epsilon,\epsilon, 31,3)$};
				\node[state](q_8) [right=of q_2] {$(2,2,3,32)$};
				\node[state](q_9) [right=of q_7] {$(2,2,31,32)$};
				\node[state](q_10) [right=of q_9] {$(21,21,31,32)$};		
				\path[->]
				(q_0)  edge  (q_1) edge  (q_2)
				(q_1)  edge  (q_3) edge  (q_4)	
				(q_4) edge (q_5) edge (q_8)
				(q_5) edge (q_10)	
				(q_2) edge  (q_7) edge  (q_8)
				(q_7) edge (q_9)
				(q_9) edge (q_10)					
				(q_8)  edge  (q_10);
			\end{tikzpicture}
		}
		\caption{The Hasse diagram for the poset $\calv_k(v)$ with $v=213$ and $k=4.$} \label{fig2}
	\end{figure}
	
	\begin{figure} [htp]
		\centering	
		\resizebox{0.7\textwidth}{!}{
			\begin{tikzpicture}[shorten >= 1pt,node distance=4cm,on grid,auto]
				\node[state] (q_0)   {$(\epsilon,\epsilon,\epsilon,\epsilon)$}; 
				\node[state] (q_1) [above right=of q_0] { $(1,1,1,\epsilon)$}; 
				\node[state] (q_2) [below right=of q_0] { $(\epsilon, \epsilon, \epsilon, 2)$}; 
				\node[state] (q_4) [below right=of q_1] { $(1,1, 1,2)$};
				\node[state] (q_3) [right=of q_1] { $(13,1,1,\epsilon)$};
				\node[state] (q_7) [right=of q_3] { $(13,14,14,\epsilon)$};
				\node[state] (q_4) [below right=of q_1] { $(1,1, 1,2)$};
				\node[state] (q_5) [above right=of q_1] { $(1,14, 14,\epsilon)$};
				\node[state] (q_8) [right=of q_4] { $(13,1,1,2)$};			
				\node[state] (q_6) [below right=of q_2] { $(\epsilon,\epsilon,\epsilon,24)$};
				\node[state] (q_10) [right=of q_6] { $(1,1,1,24)$};
				\node[state] (q_11) [ right=of q_10] { $(1,14,14,24)$};
				\node[state] (q_12) [right=of q_8] { $(13,14,14,24)$};\					 			
				\path[->] 
				(q_0)  edge  (q_1) edge  (q_2)
				(q_1)  edge  (q_3) edge  (q_4) edge (q_5)
				(q_2)  edge  (q_4) edge  (q_6)
				(q_3)  edge  (q_7)			
				(q_4)  edge  (q_8) 
				(q_6)  edge  (q_10)
				(q_8)  edge (q_12)
				(q_10)  edge  (q_11);								
			\end{tikzpicture}
		}
		\caption{The Hasse diagram for the poset $\calv_k(v)$ with $v=132$ and $k=4.$} \label{fig3}
	\end{figure}

	\begin{figure} [!t]
		\centering	
		\begin{algorithmic}
			\STATE Take $v$, $w$ as the input data.
			\STATE Let $n$ be the length of $w$, and $k$ be the number of distinct letters in $w$. 
			\STATE Set $\xi:=\xi^\epsilon$ and $i=1$.
			\STATE Compute $\Phi_k(v),$ $\cale_k(v)$, $\calv_k(v)$, and  $L_{v,k}(.)$.
			\WHILE{$i \leq n$}
			\IF {$w_i\in L_{v,k}(\xi)$}
			\STATE Set $\xi$ to $\xi w_i$
			\ENDIF
			\STATE Increase $i$ by one.
			\ENDWHILE
			\IF {$\xi \in \calv_k(v)$}
			\RETURN $w$ avoids $v$
			\ELSE
			\RETURN $w$ includes $v$
			\ENDIF
		\end{algorithmic}
		\caption{The pseudo-code of the avoidance detection algorithm.} \label{alg1}
	\end{figure}	
	
	With these notations in hand, the avoidance detection algorithm described informally in the beginning of this section is formalized as shown in Figure \ref{alg1}. In the rest of this section, we provide some basic information on two important objects used by this algorithm; namely $\calv_k(v)$ and $L_{v,k}(.)$. The first result gives an upper bound on the depth of the poset $\calv_k(v)$:
	
	\begin{lemma} \label{chain-lemma}
		For any pattern $v$, let $h_k(v)$ be the size of the maximum chain of $\calv_k(v).$ Then,
		\beqn \label{upper_depth}
		h_k(v) \leq \binom{k}{d}\ell.
		\feqn
	\end{lemma}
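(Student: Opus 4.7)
The plan is to exhibit a strictly increasing integer-valued potential function on $\calv_k(v)$ whose range is bounded, and then use the usual observation that the length of any chain in a poset is bounded by the range of a strictly monotone function on that poset. Specifically, I would define
\[
\Phi(\xi) := \sum_{u\in \Phi_k(v)} |\xi_u|, \qquad \xi\in \calv_k(v),
\]
where $|\cdot|$ denotes word length. By definition of $\calv_k(v)$, for every $u\in\Phi_k(v)$ the component $\xi_u$ is a \emph{proper} prefix of $u$ (otherwise $\xi\in\calo_k(v)$), so $|\xi_u|\le \ell-1$, and therefore
\[
0 \le \Phi(\xi) \le (\ell-1)\binom{k}{d}.
\]

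Next I would show $\Phi$ is strictly increasing along the covering relation of $<_\diamond$. If $\xi\to \xi i$ is a one-step extension for some letter $i\in[k]$, then by definition there exists a non-empty subset $A\subseteq \Phi_k(v)$ with $(\xi i)_t = \xi_t i$ for $t\in A$ and $(\xi i)_t = \xi_t$ otherwise. Hence $\Phi(\xi i) - \Phi(\xi) = \#A \ge 1$. Iterating, if $\xi <_\diamond \eta$ then $\Phi(\eta) \ge \Phi(\xi)+1$, since $\eta$ is obtained from $\xi$ by at least one extension step.

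Given any chain $\xi^{(0)} <_\diamond \xi^{(1)} <_\diamond \cdots <_\diamond \xi^{(m)}$ in $\calv_k(v)$, the strict monotonicity of $\Phi$ yields $\Phi(\xi^{(j+1)}) \ge \Phi(\xi^{(j)})+1$ for each $j$, and so
\[
m \le \Phi(\xi^{(m)}) - \Phi(\xi^{(0)}) \le (\ell-1)\binom{k}{d}.
\]
The number of elements in the chain is $m+1 \le (\ell-1)\binom{k}{d}+1 \le \ell\binom{k}{d}$, where the last inequality uses $\binom{k}{d}\ge 1$ (which holds since the lemma is applied in the regime $k\ge d$). Taking the supremum over chains gives \eqref{upper_depth}.

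I do not anticipate any real obstacle; the only thing that requires a moment of care is to verify that $|\xi_u|\le \ell-1$ for \emph{every} coordinate of $\xi\in\calv_k(v)$ (not merely for one coordinate), so that the bound on $\Phi$ is genuinely $(\ell-1)\binom{k}{d}$ rather than something weaker. This follows immediately from the definition $\calv_k(v)=\cale_k(v)\setminus\calo_k(v)$ together with the fact that $|\xi_u|=\ell$ would force $\xi_u = u$ and hence $\xi\in\calo_k(v)$.
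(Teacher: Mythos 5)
Your proof is correct and takes essentially the same approach as the paper: the paper's (very terse) argument is precisely that each of the $\binom{k}{d}$ coordinates of an element of $\calv_k(v)$ ranges over the $\ell$ values $\{\epsilon,u_1,u_1u_2,\ldots,u_1\cdots u_{\ell-1}\}$, which is your bound $0\le\sum_{u}|\xi_u|\le(\ell-1)\binom{k}{d}$ in disguise. Your write-up merely makes explicit the strict monotonicity of this potential along $<_\diamond$, which the paper leaves implicit.
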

	\begin{proof}
		Recall that every element of $\calv_k(v)$ has $\binom{k}{d}$ entries, where for each $u\in \Phi_k(v)$, the corresponding entry can only take its values from the set of $\ell$ distinct values 
		\beq
		\{\epsilon, u_1, u_1u_2, \cdots, u_1\ldots u_{\ell-1}\}.
		\feq
		Hence the result follows.
	\end{proof}
	
	We end this section with the following observation.
	
	\begin{lemma} \label{li_lemma}
		Suppose $v$ is any pattern with $d$ distinct letters. For any $\xi\in \calv_k(v),$ we have
		\beqn
		\# L_{v,k}(\xi) \geq k-d+1. \label{card_ui2}
		\feqn	
	\end{lemma}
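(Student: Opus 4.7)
The plan is to argue by contradiction, exploiting the canonical bijection between elements of $\Phi_k(v)$ and $d$-subsets of $[k]$ that gives $\#\Phi_k(v)=\binom{k}{d}$. First, I would give a concrete description of $L_{v,k}(\xi)$. Because $\xi\in\calv_k(v)$, no coordinate $\xi_u$ equals the full word $u$, so each $\xi_u$ is a proper prefix of $u$ and has a well-defined ``next letter'' $c_u:=u_{|\xi_u|+1}\in [k]$. By the one-step extendability definition, a letter $i\in[k]$ lies in $L_{v,k}(\xi)$ precisely when the updating set $A=\{u\in\Phi_k(v):c_u=i\}$ is non-empty, which gives
\[
L_{v,k}(\xi)=\{c_u:u\in\Phi_k(v)\}.
\]

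Next, suppose for contradiction that $\#L_{v,k}(\xi)\leq k-d$, so that $B:=[k]\setminus L_{v,k}(\xi)$ has at least $d$ elements. Pick any $d$-subset $B'\subseteq B$. Since each $d$-subset $S\subseteq[k]$ is the letter set of exactly one element of $\Phi_k(v)$ (the unique word in $[k]^\ell$ order-isomorphic to $v$ whose letters are $S$), there is a unique $u^\star\in\Phi_k(v)$ whose letter set equals $B'$.

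Finally, the letter $c_{u^\star}$ must appear in $u^\star$, hence $c_{u^\star}\in B'\subseteq B$, so $c_{u^\star}\notin L_{v,k}(\xi)$. But $c_{u^\star}\in L_{v,k}(\xi)$ by the explicit description in the first paragraph, a contradiction; therefore $\#L_{v,k}(\xi)\geq k-d+1$. I do not anticipate any serious obstacle: the argument is essentially a definition chase. The only subtlety worth checking is that applying $i=c_{u^\star}$ to $\xi$ really does produce a state in $\cale_k(v)=\calv_k(v)\cup\calo_k(v)$, but this is automatic, since $\xi$ is already reachable from $\xi^\epsilon$ and the one-step extension $\xi\mapsto \xi i$ (being well defined because $u^\star\in A$) preserves reachability.
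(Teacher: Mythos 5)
Your proof is correct, and it takes a genuinely different route from the paper's. The paper proves the bound by a double induction on $k$ and $d$: after the base cases $k=d$, $d=1$, and the exact count at $\xi^\epsilon$, it splits on whether $k+1\in L_{v,k+1}(\rho)$ and reduces either to the poset over the smaller alphabet $[k]$ or to the poset of a reduced pattern with $d-1$ distinct letters, which requires the auxiliary patterns $v'$, $v''$ and a fair amount of bookkeeping. You instead observe that, since no coordinate of $\xi\in\calv_k(v)$ is a completed instance, every $u\in\Phi_k(v)$ has a well-defined next letter $c_u$, that $L_{v,k}(\xi)=\{c_u:u\in\Phi_k(v)\}$, and that $c_u$ lies in the letter set of $u$; combined with the bijection between $\Phi_k(v)$ and the $d$-subsets of $[k]$ (the same fact the paper uses to get $\phi_k(v)=\binom{k}{d}$), this shows $L_{v,k}(\xi)$ meets every $d$-subset of $[k]$, so its complement has at most $d-1$ elements. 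This one-stroke transversal argument is shorter, handles the boundary case $k=d$ uniformly, and avoids the inductive machinery entirely; the only point requiring the care you gave it is that $\xi i$ remains in $\cale_k(v)=\calv_k(v)\cup\calo_k(v)$, which is automatic by reachability from $\xi^\epsilon$ through $\xi$. What the paper's induction buys instead is explicit structural information about how $\calv_{k+1}(v)$ projects onto $\calv_k(v)$ and onto posets of reduced patterns, but none of that is needed for the bound itself.
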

	\begin{proof}
		The basic idea of the proof is that with the alphabet $[k]$ and the pattern $v$ made of $d$ distinct letters, we can bound $\#L_{v,k}(.)$ from bellow in terms of either $\#L_{v',k-1}(.)$ or $\#L_{v,k-1}(.)$ for a certain pattern $v'$ with $d-1$ distinct letters. Hence, an inductive argument with respect to $k$ and $d$ would be a natural option to prove the statement. First, note that for any pattern $v$ with $d$ distinct letters, we have 
		\beqn
		\#L_{v,d}(\xi) &=& 1, \mbox{ for all } \xi\in \calv_d(v),\notag \\
		L_{v,k}(\xi^\epsilon) &=& \{v_{1},\cdots, k - d + v_{1} \}, \quad \#L_{v,k}(\xi^\epsilon) = k-d+1. \label{card_ui}
		\feqn   
		Furthermore, for $v$ with $d=1$ 
		\beq 
		\#L_{v,k}(\xi) = k \mbox{ for all } \xi \in \calv_k(v).
		\feq 
		Now, we state our induction hypothesis: 
		\begin{enumerate}
			\item[] For some fixed $d, k\in \nn$ with $k\geq d$, and any pattern $v$ with $d_0\leq d$ distinct letters, if $\xi \in \calv_k(v),$ then we have $\#L_{v,k_0}(\xi) \geq k_0-d_0+1$ with $d_0\leq k_0\leq k$. 
		\end{enumerate}
		Finally, we complete the induction argument by showing that for these $k$ and $d$,
		\beqn 
		\#L_{v,k+1}(\rho) &\geq& k-d+2, \mbox{ for all } \rho \in \calv_{k+1}(v), \label{case1}\\
		\#L_{v'',k}(\rho) &\geq& k-d, \mbox{ for all } \rho \in \calv_{k}(v''), \label{case2}
		\feqn
		where $v''$ is a pattern obtained by inserting one or more copies of the letter $d+1$ into any arbitrary position(s) in $v$. 
		\par
		To that end, let $v$ be any pattern of $d$ distinct letters. If $\rho=\rho^\epsilon:=(\epsilon, \cdots, \epsilon)\in \calv_{k+1}(v)$, then by \eqref{card_ui} we are done. Otherwise, we consider two cases for any other $\rho \in \calv_{k+1}(v)$:
		\begin{enumerate}
			\item if $k+1\in L_{v,k+1}(\rho)$, then define $\xi$ to be the set of $\rho_u$s where $u$ does not have the letter $k+1$. Clearly $\xi \in \calv_k(v)$, $k+1\notin L_{v,k}(\xi)$, 
			\beq
			L_{v,k}(\xi) \cup \{k+1\} \subset L_{v,k+1}(\rho), \mbox{ and } \#L_{v,k}(\xi) + 1 \leq \#L_{v,k+1}(\rho).
			\feq 
			Here, $\xi$ should be understood as the vector of elements in the set ordered by $<_\circ.$
			\item if $k+1\notin L_{v,k+1}(\rho)$, then define $v'$ to be a pattern with $d-1$ distinct letters; obtained by dropping the letter $d$ everywhere in $v$. Also, define $\xi$ to be the set of $\nu_u$s after dropping the letter $k+1$ (if any) where $u$ has the letter $k+1$. In this case, we have $\xi \in \calv_k(v')$,
			\beq
			L_{v',k}(\xi) \subset L_{v,k+1}(\rho), \mbox{ and } \#L_{v',k}(\xi) \leq \#L_{v,k+1}(\rho).
			\feq
		\end{enumerate}
		These two cases along with the induction hypothesis yield \eqref{case1}. Inequality \eqref{case2} can also be proven using an identical argument. This completes the proof. 
	\end{proof}

	\section{Proof of Theorem \ref{BM_theorem}} \label{kn-sec}
	This section is devoted to the proof of Theorem~\ref{BM_theorem}. We start with a few definitions.
	\par
	Let $\PP$ be a probability law that induces the uniform distribution on $[k]$. For
	technical convenience, we enlarge the probability space of $\PP$ such that it contains all random variables used in the following. 
	Let $(X(n))_{n\in \nn}$ be an i.i.d. sequence of random variables where $X(1)$ is distributed as a uniform variable on $[k]$; that is,
	\beq
	\pp(X(n)=i)= \frac{1}{k} \quad \forall i\in [k], \ \ n \in \nn.
	\feq
	Set the sequence $Z:=(Z(n))_{n\in \nn_0}$ where $Z(0):=\xi^\epsilon$ and
	\beq
	Z(n+1)=(E_u(Z_u(n),X(n+1)))_{u \in \Phi_v(k)},
	\feq
	where $E_u(.,.): \Xi(u) \times [k] \to  \Xi(u)$ is defined by
	\beq
	E_u(\xi_u, x) =
	\begin{cases}
		\xi_u &\quad\text{if } \quad \xi_u x \notin \Xi(u) \\
		\xi_u x &\quad\text{otherwise,}
	\end{cases}
	\feq
	for $\xi_u \in \Xi_u,$ $x\in [k],$ and $u\in \Phi_k(v)$.
	\par
	Note that $Z$ is a Markov chain on $\cale_k(v)$ and we use $P_{v,k}$, or simply $P$, to refer to the transition matrix of $Z(n)$ defined as follows. For any two given states $\xi, \nu\in \cale_k(v),$
	\beqn \label{pxixi}
	P(\xi,\nu) = P_{v,k}(\xi,\nu) := \begin{cases}
		\frac{1}{k} & \mbox{$\nu=\xi i$ for some $i\in [k]$}, \\
		1-\frac{\# L_{v,k}(\xi) }{k} & \nu=\xi, \\
		0 & \mbox{otherwise.}\\
	\end{cases}
	\feqn
	\par
	By Lemma \ref{li_lemma}, this yields, for $\xi\in \calv_k(\xi)$,
	\beqn \label{Pxixi}
	P(\xi^\epsilon,\xi^\epsilon) = \frac{d-1}{k}, \quad\mbox{and}\quad 	P(\xi,\xi) \leq \frac{d-1}{k}.
	\feqn
	
	To summarize, $P$ may be written as an upper triangular matrix in which the values of all non-zero non-diagonal entries are $1/k$ and the diagonal entries are bounded above by $(d-1)/k$.											
	\par
	We are now ready to give the
	
	\begin{proof}[Proof of Theorem \ref{BM_theorem}-(a) and (c)]
		By the avoidance detection algorithm (Figure \ref{alg1}) and the definition of $Z(n),$ $F_0^v([k]^n)$ can be described in terms of the set of realizations of the Markov chain $Z$ up to the time $n$ in which $Z(0), Z(1), \cdots, Z(n) \in \calv_k(v)$. That being said, we may write
		\beq																		f_0^v([k]^n)= k^n\sum_{\xi \in \calv_k(v) } P^{(n)}(\xi^\epsilon, \xi),
		\feq
		where $P^{(n)}$ is the $n$-fold transition probability matrix of the Markov chain $Z$. Given that all nonzero entries of $P$ are of form $\frac{i}{k},$ with $1\leq i\leq d-1$, and $P(\xi^\epsilon, \xi^\epsilon)=\frac{d-1}{k}$, the proof of the part (c) follows.
		\par
		We now prove the part (a). To that end, the lower bound is  simply obtained by the observation that any word in $[d-1]^n\subset [k]^n$ avoids $v$. To obtain the upper-bound, first we define a pure birth process $(\tilde Z(n))_{n\in \nn_0}\subset \nn^\nn$, where $$\PP(\tilde Z(0)=0)=1$$ and 
		\beq
		\PP(\tilde Z(n+1) = i | \tilde Z(n) = i) = 1- \PP(\tilde Z(n+1) = i+1 | \tilde Z(n) = i) = \frac{d-1}{k},
		\feq
		for all $n\in \nn_0$ and $i\in \nn$.
		Clearly, $\tilde Z(n+1)-\tilde Z(n)$ is a Bernoulli random variable with the success probability $\frac{k-d+1}{k}.$ Hence, $\tilde Z(n)$ is sum of $n$ Bernoulli random variables of parameter $\frac{k-d+1}{k}.$
		\par
		Second, we point out that
		\begin{enumerate}
			\item once the process $Z(n)$ leaves a given state $\xi$, it will never come back to $\xi$ again,
			\item starting from $\xi^\epsilon,$ $Z(n)$ can take maximum $h_k(v)$ distinct values before leaving the set $\calv_k(v)$,
			\item by \eqref{Pxixi}, $Z(n)\in \calv_k(n)$ stays in its current state with maximum probability $(d-1)/k$.
		\end{enumerate}
		Thus,
		\beq
		&& \PP(Z(n) \in \calv_k(v)) \leq \PP(\tilde Z(n) \leq h_k(v)) \\
		&& \quad = \frac{1}{k^n} \sum_{i=0}^{h_k(v)} \binom{n}{i} (k-d+1)^i(d-1)^{n-i} \leq \frac{(d-1)^n}{k^n} \sum_{i=0}^{\ell \binom{k}{d}} \binom{n}{i} \left(\frac{k-d+1}{d-1}\right)^i,
		\feq
		where for the last inequality we used \eqref{upper_depth}. Finally, note that $f_{0}^v([k]^n)$ is simply $k^n\PP(Z(n) \in \calv_k(v))$ and hence the proof is complete.
	\end{proof}
	
	We conclude this section with the
	
	\begin{proof} [Proof of Theorem \ref{BM_theorem}-(b)]
		The $r=0$ case is an immediate consequence of part (a) since the summation term in the upper bound is a finite polynomial of $n$:
		\beq
		d-1 &\leq& \liminf_{n\to \infty} f_0^v([k]^n)^{\frac{1}{n}} \leq \limsup_{n\to \infty} f_0^v([k]^n)^{\frac{1}{n}} \\
		&\leq& (d-1) \limsup_{n\to \infty}  \left( \sum_{i=0}^{\ell \binom{k}{d}} \binom{n}{i} \left(\frac{k-d+1}{d-1}\right)^i \right)^{\frac{1}{n}} = d-1.
		\feq
		\par
		To prove the result for the general case $r\in \nn,$ let $f_{r,s}^v([k]^n)$ be the number of words in $[k]^n$ with exactly $r$ occurrences of the pattern $v$  and the occurrence subsequence of length $s$. Clearly, for $s \notin \{\ell+r-1, \cdots,  r\ell-1, r\ell \}$, we have $f_{r,s}^v([k]^n)=0.$ Hence, $f_{r}^v([k]^n)$ can be written as
		\beqn \label{f_decom1}
		f_{r}^v([k]^n) = \sum_{s=\ell+r-1}^{r\ell} f_{r,s}^v([k]^n).
		\feqn
		Since for each $k$-ary word of length $n$ with $r$ occurrences of $v$ and $s$ occurrence subsequence
		\begin{enumerate}
			\item removing the entire occurrence subsequence gives a new word of length $n-s$ that avoids $v$,
			\item the occurrence subsequence may happen in any of $\binom{n}{s}$ locations in the word, and
			\item the number of possibilities for the occurrence subsequence is bounded above by $k^s$,
		\end{enumerate}
		then
		\beqn \label{f_decom2}
		f_{0}^v([k]^{n-s}) \leq f_{r,s}^v([k]^n) \leq k^s \binom{n}{s}f_{0}^v([k]^{n-s}).
		\feqn
		Equations \eqref{f_decom1} and \eqref{f_decom2} together imply
		\beq
		f_0^v([k]^{n-r\ell}) \leq f_{r}^v([k]^n) =  \sum_{s=\ell+r-1}^{r\ell} f_{r,s}^v([k]^n) \quad \leq \sum_{s=\ell+r-1}^{r\ell } k^s \binom{n}{s} f_{0}^v([k]^{n-s}).
		\feq
		Hence, by taking the limit from all the sides of the inequality, we arrive at
		\beq
		&&\liminf_{n\to \infty} f_{0}^v([k]^{n-r\ell})^{\frac{1}{n}} \leq \liminf_{n\to \infty} f_{r}^v([k]^n)^{\frac{1}{n}} \\
		&&\quad  \leq \limsup_{n\to \infty} f_{r}^v([k]^n)^{\frac{1}{n}} \leq \limsup_{n\to\infty} \left(\sum_{s=\ell+r-1}^{r\ell} k^s \binom{n}{s} f_{0}^v([k]^{n-s}) \right)^{\frac{1}{n}}.
		\feq
		Left side is clearly $d-1$ by the $r=0$ case. The right side is also $d-1$ since
		\begin{enumerate}
			\item it is a finite sum with the number of terms independent of $n$, and
			\item the coefficients of $f_0^v([k]^{n-s})$s are finite degree polynomials in $n$.
		\end{enumerate}
		The proof of the general case $r\in \nn$ is complete.
	\end{proof}
	
	\section{Proof of Theorem \ref{i^nSn}} 	
	
	This section is devoted to the the proof of Theorem \ref{i^nSn} relating the pattern occurrence in $[k]^n$ and in $\cals_n$. \\
	
	\begin{proof} [Proof of Theorem \ref{i^nSn}-(a)]
		Recall \eqref{gfr}. Let $Y_n(A)$ be the set of all words of length $n$ where the distinct set of letters of each word is exactly the set $A$. It is defined as
		\beq
		&&Y_n(A) := A^n \setminus \left(\cup_{e\in A} A_e^n\right)\quad \mbox{where}\quad A_e:=A\setminus\{e\}.
		\feq	
		Observe that this implies
		\beqn  \label{LnA}
		F_r^v(Y_n(A)) = F_r^v(A^n) \setminus \left(\cup_{e\in A} F_r^v(A_e^n)\right).
		\feqn
		Additionally, for any non-empty subset $I\subset  A,$
		\beq
		\cup_{e\in I} F_r^v(A_e^n) = F_r^v(\cup_{e\in I} A_e^n)
		\feq
		and
		\beq
		\cap_{e\in I} F_r^v(A_e^n) = \cap_{e\in I} F_r^v(A_e^n).
		\feq
		Then, an application of the inclusion exclusion principle to \eqref{LnA} yields
		\beq
		&& f_r^v(Y_n(A)) = f_r^v(A^n) - \sum_{I\subset A} (-1)^{n-\#I}f_r^v(\cap_{e\in I} A_e^n ) \\
		&& = \sum_{I\subset A} (-1)^{\#I}f_r^v((A\setminus I)^n ).
		\feq
		Since $f_0^v(A^n) = f_0^v(B^n)$ for any $A,B\subset [n]$ where $A$ and $B$ are the same size, for each $a\in \nn,$ we get	
		\beq
		f_r^v(Y_n([a])) = f_r^v([a]^n) + \sum_{k=1}^{a-1} (-1)^{k} \binom{a}{k} f_r^v([a-k]^n).
		\feq	
		Finally, observe that $Y_n([n]) = \cals_n$. This completes the proof of \eqref{f_perm_n}. \par
		To prove \eqref{exp_relations}, note that
		\beq
		\frac{\partial^k}{\partial x^k}W^v_{r,k}(-x) = (-1)^{k} \sum_{n=k}^\infty \frac{(-x)^{n-k}}{(n-k)!}f_r^v([k]^{n}).
		\feq
		By multiplying the terms in Lemma \ref{i^nSn} by $x^n/n!$, adding up over natural numbers, and some simplification we have
		\beq
		&&S^v_r(x) = \sum_{n=1}^\infty \frac{x^n}{n!} \sum_{k=1}^{n} (-1)^{n-k} \binom{n}{k} f_r^v([k]^n) \\
		&& \qquad = \sum_{k=1}^\infty \frac{x^k}{k!} \sum_{n=k}^{\infty} \frac{(-x)^{n-k}}{(n-k)!} f_r^v([k]^n) = \sum_{k=1}^\infty \frac{(-x)^k}{k!} \frac{\partial^k}{\partial x^k}W^v_{r,k}(-x).
		\feq
		The proof of this part is complete.
	\end{proof}

	It is not hard to see $f_0^{12}([k]^n)$ is given by
	$\binom{n+k-1}{n}$ and $f_0^{12}(\cals_n)=1$. Thus, an application of Theorem \ref{i^nSn}-(a) leads to the following identity
	\beq
	\sum_{k=1}^{n} (-1)^{n-k} \binom{n}{k}\binom{n+k-1}{n} = 1.
	\feq
	
	We also point out that Theorem \ref{BM_theorem}-(c), together with \eqref{f_perm_n}, yields
	\beqn \label{Sn_poly}
	f_r^v(\cals_n) = \sum_{i=0}^{n} (d-1)^i  \left(\sum_{k=1}^{n} (-1)^{n-k} \binom{n}{k}  a_{i,k,v}(n) \right),
	\feqn
	where all $a_{i,k, v}(n)$s are polynomials in $n$ with integer coefficients possibly depending on $k$, $v$, and $d$. Identity \eqref{Sn_poly} may provide some direct informal justification about why $c_v$ is an exponential function of $d$ for certain patterns. This statement was formally established by Fox \cite{fox} in a recent paper, where the proof relies on a refinement and extension of the framework developed in \cite{MT}.
	\par
	
	We finally end this section with the following observation. Note that for any set $A$, the partition $A^n = \cup_{I \subseteq A} Y_n(A\setminus I)$ yields
	\beqn \label{rel1}
	f_0^v(A^n ) = \sum_{I\subseteq A} f_0^v(Y_n(A\setminus I)) .	
	\feqn
	In addition, we have
	\beqn\label{rel2}
	f_0^v(Y_n(A\setminus I)) \leq \binom{n}{i} f_0^v([i]^{n-i}) f_0^v(\cals_i),
	\feqn
	since we may decompose each $w\in F_0^v([n]^n)$ into two words one of which is of length $i$ with $i$ distinct letters that avoids $v$ (the number of such words is $f_0^v(\cals_i)$) and the other is in $[i]^{n-i}$ avoiding $v$ (the number of such words is $f_0^v([i]^{n-i})$). These two words can be combined in $\binom{n}{i}$ ways. Hence, by setting $A=[n]$ in \eqref{rel1} and \eqref{rel2}, we arrive at
	\beqn \label{sdisc}
	f_0^v([n]^n)  \leq \sum_{i=1}^n \binom{n}{i}^2  f_0^v([i]^{n-i}) f_0^v(\cals_i).
	\feqn
	
	\section*{Acknowledgement}
	
	R.R. would like to thank Alex Roitershtein for many fruitful conversations on the pattern avoidance and occurrence.  We also would to thank Zachary Hunter for pointing out a gap in the proof of Theorem \ref{BM_theorem} in an earlier draft of this paper.

	
\end{document}